\documentclass[10pt]{article}
\usepackage{amsfonts,amssymb,latexsym, amsmath, amsthm}

\usepackage[usenames]{color}


\newtheorem{thm}{Theorem}[section]
\newtheorem{lem}[thm]{Lemma}
\newtheorem{cor}[thm]{Corollary}

\theoremstyle{remark}
\newtheorem{rem}[thm]{Remark}

\def\NN{{\mathbb N}}

\def\rst{{\rm rist}}

\def\leq{\leqslant}
\def\geq{\geqslant}
\def\leqf{\leq_{\rm f}}
\def\normf{{\,\triangleleft_{\rm f}\,}}
\def\fX{{\frak X}}

\def\Aut{{\rm Aut}}

\begin{document}
\title{\Large\bf On subgroups of finite index in branch groups}
\author{Alejandra Garrido and John S.\ Wilson\\   Mathematical Institute, Oxford, United Kingdom}

\date{}
\maketitle

\begin{abstract}
We give a structural description of the normal subgroups of subgroups of finite index in branch groups in terms of rigid stabilizers. This gives further insight into the structure lattices of branch groups introduced by the second author. We derive a condition concerning abstract commensurability of branch groups acting on the $p$-ary tree for any prime $p$.
\end{abstract}

\section{Introduction}
The class of branch groups was introduced by Grigorchuk in 1997, to provide a general framework for
studying groups arising as counter-examples in a wide variety of contexts.  This class also plays a natural role in the study of just infinite groups (see \cite{Jsurvey}). 
By now, the structure
theory of branch groups is quite well developed; see for example \cite{handbook, slavachapter, Jsurvey}.  Among the remarkable properties of arbitrary branch groups is a striking result proved by Grigorchuk describing their 
non-trivial normal subgroups; it has the consequence that every proper quotient of a branch group is virtually abelian.  The definition of branch groups (given below) shows that some subgroups of finite index (for example, subgroups $\rst_G(n)$ with $n>0$) do not share this property.
 However, our main theorem, Theorem \ref{main} below, gives a reasonably precise description of normal subgroups of subgroups of finite index in branch groups. 

Branch groups are defined in terms of their action on a specific type of tree.
 Let  $(m_n)_{n\geq0}$ be a sequence of integers with $m_n\geq2$ for each $n$. 
The rooted tree of type $(m_n)$ is a tree $T$ with a vertex $v_0$ (called the root vertex) of valency $m_0$, such that every vertex at a distance $n\geq1$ from $v_0$ has valency $m_n + 1$. 
The distance of a vertex $v$ from $v_0$ is called the {\em level} of $v$, and the set $L_n$ of vertices of level $n$ is called the $n$th {\em layer} of $T$. 
 Each vertex $v$ of level $r$ is the root of a rooted subtree $T_v$ of type $(m_{n})_{n\geq r}$. 
We picture $T$ with the root at the top and with $m_n$ edges descending from each vertex of level $n$.  Therefore we call the vertices below a vertex $v$ the {\em descendants} of $v$.
 If $m_n=d$ for every $n$, we say that $T$ is a $d$-ary tree.
 
Now suppose that $G$ is a group of tree automorphisms of $T$ fixing $v_0$. For each vertex $v$ write $\rst_G(v)$ for the subgroup of elements of $G$ that fix all vertices outside $T_v$, and for each $n\geq0$ write $\rst_G(n)$ for the direct product $\langle \rst_G(v)\mid v\in L_n\rangle$. We also write $\rst_G(X)=\prod_{x\in X} \rst_G(x)$ for each subset $X$ of  $L_n$.  
The group $G$ is said to be a {\em branch group} on $T$ if the following two conditions hold for each $n\geq0$:   \begin{enumerate}
\item[(i)] $G$ acts transitively on $L_n$;
\item[(ii)]  $|G : \rst_G(n)|$ is finite. \end{enumerate}

Notice that the transitive action of $G$ on all layers of $T$ implies that $G$ is infinite.  Moreover $G$ is evidently residually finite.  We shall use these facts  throughout the paper without further mention.

Branch groups are subject to strong restrictions. The proof of  \cite[Lemma 2]{geomded} shows that 
they have no non-trivial virtually abelian normal subgroups and in \cite[Theorem 4]{slavachapter} the following description of normal subgroups is given:

\begin{thm}\label{slava}  Suppose that $G$ is a branch group acting on a tree $T$ and let 
$K\triangleleft G$ with $K\neq1$.  Then $K$ contains the derived subgroup $\rst_G(n)'$ of
$\rst_G(n)$ for some integer $n$. \end{thm}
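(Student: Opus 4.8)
The plan is to derive the whole statement from a \emph{single} nontrivial element $g\in K$, using only the normality of $K$, the direct-product structure $\rst_G(n)=\prod_{w\in L_n}\rst_G(w)$, and the transitivity of $G$ on the layers of $T$; nothing deeper about branch groups is needed. The first step is to locate a good vertex. Since $g\neq1$ and $G$ is a group of tree automorphisms (so acts faithfully on $T$), the automorphism $g$ displaces some vertex $u$, at level $m$ say. Because $g$ fixes the root it preserves levels, so $gu$ and $u$ are distinct vertices of the same level and hence span disjoint subtrees; it follows that $g$ also displaces every descendant of $u$. Thus for every $n\geq m$ there is a vertex $v\in L_n$ with $gv\neq v$ (equivalently $g^{-1}v\neq v$). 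Fix such an $n$; I will show $\rst_G(n)'\leq K$.

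Next comes the commutator trick. Fix $v\in L_n$ with $gv\neq v$ and take an arbitrary $x\in\rst_G(v)$. Then $g^{-1}x^{-1}g$ lies in $\rst_G(g^{-1}v)$, which is a direct factor of $\rst_G(n)$ distinct from $\rst_G(v)$; hence $[g,x]=(g^{-1}x^{-1}g)\,x$ lies in $\rst_G(n)$, and in the direct decomposition its component in the factor $\rst_G(v)$ is exactly $x$. By normality of $K$ we also have $[g,x]\in K$. Writing $N=K\cap\rst_G(n)$, which is normal in $\rst_G(n)$, this shows that the projection of $N$ onto the factor $\rst_G(v)$ is all of $\rst_G(v)$.

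Now I would invoke the elementary fact that a normal subgroup $N$ of a direct product $D_1\times\cdots\times D_k$ whose projection onto some factor $D_i$ is all of $D_i$ must contain $D_i'$: given $a,b\in D_i$, choose $z\in N$ whose $i$th component is $a$; the components of $z$ outside the $i$th factor commute with $b$ (regarded as an element of the product supported on the $i$th factor), so $[z,b]=[a,b]$, and $[z,b]\in N$ by normality. Applied with $D_w=\rst_G(w)$, this yields $\rst_G(v)'\leq N\leq K$. Finally, transitivity spreads this around the layer: for each $w\in L_n$ pick $h\in G$ with $hv=w$, so $\rst_G(w)=h\,\rst_G(v)\,h^{-1}$ and hence $\rst_G(w)'=h\,\rst_G(v)'\,h^{-1}\leq hKh^{-1}=K$. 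Since $\rst_G(n)=\prod_{w\in L_n}\rst_G(w)$ is a \emph{finite} direct product, $\rst_G(n)'=\prod_{w\in L_n}\rst_G(w)'\leq K$, which is the assertion.

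The only step that needs genuine care, and the only place the tree geometry really enters, is the first one: checking that a nontrivial tree automorphism fixing the root must move a vertex at some level, and that it then moves all deeper descendants along disjoint subtrees, so that in the second step the two rigid stabilizers occurring in $[g,x]$ sit in \emph{different} direct factors and the commutator has the clean form used above. Everything after that is direct-product bookkeeping together with layer-transitivity, so I expect no further obstacles.
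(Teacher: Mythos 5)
Your proof is correct, and it is essentially the standard argument: the paper itself does not prove Theorem \ref{slava} (it is quoted from Grigorchuk's work), but your commutator trick --- producing $[g,x]\in K\cap\rst_G(n)$ whose $\rst_G(v)$-component is $x$, then pulling out $\rst_G(v)'$ and spreading it over the layer by transitivity --- is exactly the known proof, and the same device appears verbatim in the paper's proof of Lemma \ref{all rst big} in the form $[x,[k,y]]=[x,y]$. Your packaging via the lemma on normal subgroups of direct products projecting onto a factor is an equivalent reformulation, and all the supporting steps (level preservation, disjointness of the subtrees below $u$ and $gu$, normality of $K\cap\rst_G(n)$ in $\rst_G(n)$) check out.
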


We shall prove the following result for normal subgroups of subgroups of finite index:

\begin{thm}\label{main} Suppose that $G$ is a branch group acting on a tree $T$,  let $H$ be a subgroup of finite index and let
$K\triangleleft H$.  
Then for all sufficiently large integers $n$ there is a union $X$ of $H$-orbits in $L_n$ such that
$$K\cap\rst_G(n)'=\rst_G(X)'.\eqno(1)$$  More precisely,
$$\rst_G(X)'\leq K\quad\hbox{and}\quad K\cap \rst_G(L_n\setminus X)=[K, \rst_G(L_n\setminus X)]=1.\eqno(2)$$ 
\end{thm}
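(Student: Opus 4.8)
The plan is to exploit Theorem~\ref{slava} applied to the normal closure of $K$ in $G$, together with a careful analysis of how $H$-orbits on layers interact with the direct-product structure of $\rst_G(n)$. First I would pass to the core $L=\bigcap_{g\in G}H^g$, a normal subgroup of $G$ of finite index contained in $H$; since $\rst_G(n)'$ is generated by the subgroups $\rst_G(v)'$ with $v\in L_n$ and since for large $n$ each $\rst_G(v)$ sits inside $L$ (because $|G:\rst_G(n)|\to$ stabilises and $\rst_G(n)\le L$ eventually — indeed $\rst_G(n)$ is normal in $G$ of finite index, so $\rst_G(n)\le L$ once $n$ is large), it suffices to understand $K\cap\rst_G(n)'$ for such $n$. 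Fix such an $n$ from now on and write $N=\rst_G(n)'=\prod_{v\in L_n}\rst_G(v)'$, a direct product over $L_n$ that is permuted by the finite group $H/L$ (and by $G/L$) according to the action on $L_n$.

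Next I would set $X=\{v\in L_n : \rst_G(v)'\le K\}$ and show $X$ is a union of $H$-orbits. If $v\in X$ and $h\in H$, then $\rst_G(v^h)'=(\rst_G(v)')^h\le K^h=K$ since $K\triangleleft H$, so $v^h\in X$; thus $X$ is $H$-invariant, and with it $\rst_G(X)'=\prod_{v\in X}\rst_G(v)'\le K$, giving the first inclusion in~(2). The substantive point is the reverse: that $K\cap N\le\rst_G(X)'$, equivalently that $K$ meets $\rst_G(L_n\setminus X)=\prod_{v\notin X}\rst_G(v)'$ trivially (after intersecting with $N$), and that it centralises this complementary factor. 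Here I would use the projection maps $\pi_v\colon N\to\rst_G(v)'$ and argue that $K\cap N$, being normalised by $H$ (hence by $L$), has the property that each $\pi_v(K\cap N)$ is normalised by $\rst_L(v)'$; Theorem~\ref{slava} applied inside the subtree $T_v$ — or rather, the fact that a branch group has no nontrivial virtually abelian normal subgroups, applied to the branch-like action of $\rst_G(v)$ on $T_v$ — should force $\pi_v(K\cap N)$ to be either trivial or to contain $\rst_G(v)'$ after another round of taking derived subgroups. This is the mechanism that produces the dichotomy defining $X$.

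The commutator statement $[K,\rst_G(L_n\setminus X)]=1$ I would deduce by a standard subdirect-product argument: if $k\in K\cap N$ has a nontrivial projection to some $v\notin X$, then conjugating $k$ by elements of $\rst_G(v)'$ produces, via commutators $[k,\rst_G(v)']\le K$, a nontrivial normal subgroup of $\rst_G(v)$ inside $\rst_G(v)'$; pushing this up through the tree with Theorem~\ref{slava} (now applied to $G$ itself, using that the $G$-normal closure of such a subgroup contains $\rst_G(m)'$ for some $m\ge n$) would show $K$ contains $\rst_G(w)'$ for every descendant $w$ of $v$ at some level, and then by transitivity and $H$-invariance that $v\in X$ after all — a contradiction. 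The cleanest route to $[K,\rst_G(L_n\setminus X)]=[K,\rst_G(v)']=1$ for $v\notin X$ is then: $[K,\rst_G(v)']$ is a subgroup of $K$, it lies in $\rst_G(v)'$, and by the previous step $K\cap\rst_G(v)'$ has trivial projection to $v$ hence is trivial (projections to other factors vanish automatically), so the commutator is trivial; then $K\cap\rst_G(L_n\setminus X)$ is likewise trivial. Assembling, $K\cap N=\rst_G(X)'$, which is~(1).

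\textbf{Main obstacle.} The delicate step is controlling the projections $\pi_v(K\cap N)$: a priori $K\cap N$ is only a subdirect-type subgroup of $\prod_{v}\rst_G(v)'$ that is $H$-invariant, not $G$-invariant, so I cannot apply Theorem~\ref{slava} to it directly. The key will be to isolate the ``diagonal'' contributions across an $H$-orbit versus genuine normal subgroups of individual factors, and to leverage that each $\rst_G(v)$ with $v\in L_n$ acts on $T_v$ in a way that inherits the branch property sufficiently (or at least the no-virtually-abelian-normal-subgroups conclusion of \cite[Lemma~2]{geomded}) to rule out small normal subgroups. Getting $n$ large enough that all of $\rst_G(n)$ lies in the core $L$, and that the $H$-orbit structure on $L_n$ has stabilised appropriately, is the bookkeeping that makes ``for all sufficiently large $n$'' work.
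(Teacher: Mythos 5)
There is a genuine gap, and it sits exactly where you flag your ``main obstacle'': the dichotomy that for $v\in L_n$ either $\rst_G(v)'\leq K$ or $K$ meets (and centralizes) $\rst_G(v)'$ trivially is the entire content of the theorem, and the mechanism you propose for it does not work. You want to apply Theorem \ref{slava} (or the no-virtually-abelian-normal-subgroups result) ``inside the subtree $T_v$'', but $\rst_G(v)$ acting on $T_v$ need not be a branch group: it need not act transitively on the layers of $T_v$, and its own rigid stabilizers need not have finite index in it, so neither result is available there. Even if it were, Theorem \ref{slava} only produces $\rst(m)'$ at some \emph{deeper} level $m$, so your attempted contradiction (``$K$ contains $\rst_G(w)'$ for descendants $w$ of $v$, hence $v\in X$ after all'') does not close: containing the derived rigid stabilizers of descendants does not give $\rst_G(v)'\leq K$. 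The genuinely hard case, which you name but do not resolve, is a ``diagonal'' $K$ with $K\cap\rst_G(v)=1$ for some $v$ while projections are nontrivial. The paper handles this by a descent on the invariant $j(K)$, the maximal length of a chain of normal subgroups of $H$ from $K$ to $H$ with no virtually soluble factors, which is finite by Lemmas \ref{general X} and \ref{short chains}: if $K\cap\rst_G(v)=1$, one forms $K_1=K\times\rst_G(X_1)'$ for a suitable $H$-orbit $X_1$ below $v$ (using Lemma \ref{notvsol} to see the product is direct), notes $j(K_1)<j(K)$, applies the result to $K_1$, and projects back, using Lemma \ref{notvsol} again to kill the abelian image. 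The complementary case, $K\cap\rst_G(v)>1$ for all $v$, is settled by the commutator identity $[x,[k,y]]=[x,y]$ for $x,y\in\rst_G(u)$ and $k$ moving $u$ (Lemma \ref{all rst big}). None of this machinery appears in your outline, and without it the reverse inclusion $K\cap\rst_G(n)'\leq\rst_G(X)'$ is unproved.

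A smaller but real error: you assert that $\rst_G(n)\leq L=\bigcap_{g\in G}H^g$ for large $n$ because $\rst_G(n)\normf G$. A descending chain of finite-index normal subgroups with trivial intersection need not eventually enter a given finite-index normal subgroup (compare $2^n\ZZ$ versus $3\ZZ$ in $\ZZ$). What is true, and suffices, is that $L$ contains $\rst_G(n_1)'$ for some $n_1$ by Theorem \ref{slava}, hence $\rst_G(n)'\leq L$ for all $n\geq n_1$. Your definition of $X$ and the verification that it is a union of $H$-orbits, and the final bookkeeping deducing $(1)$ from $(2)$, are fine and match the paper.
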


The second part of (2) above is simply the statement that the subgroups $K$ and $\rst_G(L_n\setminus X)$ generate their direct product in $G$.

It will follow easily from this result  that for any subgroup $H$ of finite index of a branch group, the number of infinite $H$-invariant direct factors of a normal subgroup of $H$ is bounded  by $|G:H|$. These results allow us to establish in Section \ref{application} a necessary condition for direct products of certain branch groups acting on the $p$-ary tree to be (abstractly) commensurable, based on the number of direct factors.  This complements a result of the first author \cite{Atransfer} concerning the Gupta--Sidki $3$-group.

We sketch another application.  The structure lattice $\cal L$ and structure graph $\cal T$ of a branch group $G$ have been defined and studied in \cite{geomded, Jsurvey}.  They depend only on the structure of $G$ as an abstract group and not on its action on the tree $T$.
The structure lattice $\cal L$ is a Boolean lattice obtained from the set of all subgroups of $G$ which have finitely many conjugates by declaring two such subgroups equivalent if their centralizers in $G$ coincide. 
The structure graph $\cal T$ has as vertices the equivalence classes in $\cal L$ corresponding to {\em basal} subgroups. 
These are the subgroups whose finitely many conjugates generate their direct product (in particular, $\rst_G(v)$ is a basal subgroup for each $v \in T$). 
Defining the edges of $\cal T$ is slightly more technical, and we refer the reader to \cite{geomded}. 
In that paper it is also shown that in many important cases $\cal T$ is in fact a tree and is even isomorphic to $T$.
The action of $G$ on its subgroup lattice induces an action by tree automorphisms on $\cal T$;
thus we can find within the subgroup structure of $G$ the tree on which $G$ acts as a branch group  and the action itself.
Theorem \ref{main} gives another approach to these objects. 
Let ${\cal L}_0$ be the family of subgroups of the form $\rst_G(X)$
with $X$ an $H$-invariant subset of a layer of $T$ for some subgroup $H\leqf G$. 
For such subsets $X_1$, $X_2$ write $\rst_G(X_1)\sim \rst_G(X_2)$ if $X_2\subseteq L_n$ consists of all descendants of $X_1$ in $L_n$ (or vice-versa).
Then $\sim$ is an equivalence relation on ${\cal L}_0$ and the quotient set with naturally defined operations is a Boolean lattice isomorphic to  $\cal L$.
  Moreover $\cal T$ corresponds to the equivalence classes
containing subgroups $\rst_G(X)$ for orbits $X$ that do not split into unions of distinct $H$-orbits 
in lower layers.  We leave the details to the interested reader.

\section{Preliminaries and proof of Theorem \ref{main}}

\noindent{\bf Notation.} \  We write $H\leqf G$ (resp.\ $H\normf G$) to mean that $H$ is a subgroup (resp.\ a normal subgroup) of finite index in $G$, and $K^S$ denotes the subgroup of $G$ generated by all conjugates of a subgroup $K$ under a subset $S\subseteq G$. \medskip

We begin by noting an immediate consequence of the definition of branch groups. It will be used throughout the rest of the paper. \medskip

\begin{rem}\label{bounded orbits}  Let $G$ be a branch group on a tree $T$ with layers $L_n$.
Since $G$ acts transitively on each $L_n$, a subgroup $H\leq G$ of finite index will have at most $|G:H|$ orbits on $L_n$. 
If $X$ is an $H$-orbit in $L_n$ then the descendants of $X$ on $L_{n+1}$ comprise a union of $H$-orbits.
Hence there is some $n_0$ such that for all $n\geq n_0$
the number of $H$-orbits on $L_n$ is equal to the number of $H$-orbits on $L_{n_0}$.
\end{rem}\medskip

The following preliminary results concern virtually soluble subgroups and factors of branch groups and they will be used in the proof of Theorem \ref{main}.
The next lemma gives some consequences of a result from \cite{geomded}.

\begin{lem}\label{notvsol} Let $G$ be a branch group on a tree $T$.  
Then $G$ has no non-trivial abelian subgroups $K$ satisfying $K\triangleleft K^G\triangleleft G$.  Moreover, the following assertions hold\/$:$
\begin{enumerate} \item[\rm(a)]  $\rst_G(v)$ is not virtually soluble, for each vertex $v$ in $T;$ 
\item[\rm(b)]  if $H\leqf G$ then 
$H$ has no non-trivial virtually soluble normal subgroups. \end{enumerate} \end{lem}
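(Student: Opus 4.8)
The first assertion is the result recalled from \cite{geomded}, and the plan is to deduce (a) and (b) from it together with the fact, noted in the introduction (and also coming from \cite{geomded}), that $G$ has no non-trivial virtually abelian normal subgroup. I would first record the convenient intermediate statement $(\star)$: \emph{$G$ has no non-trivial virtually soluble normal subgroup, and $G$ is not virtually soluble}. This follows easily: if $1\neq N\triangleleft G$ were virtually soluble then $N$ would be finite (contradicting the absence of virtually abelian normal subgroups) or else infinite, whence its soluble radical $R(N)$ is a non-trivial characteristic soluble subgroup of finite index in $N$, and the last non-trivial term of the derived series of $R(N)$ is a non-trivial abelian normal subgroup of $G$ -- a contradiction; applying this to the core of a soluble subgroup of finite index shows $G$ itself is not virtually soluble.

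Part (a) is then short: if $\rst_G(v)$ with $v\in L_n$ were virtually soluble, then, as $G$ is transitive on $L_n$, every $\rst_G(w)$ with $w\in L_n$ would be a conjugate of $\rst_G(v)$ and hence virtually soluble, so the (finite) direct product $\rst_G(n)$ would be virtually soluble; since $\rst_G(n)$ has finite index in $G$, this would make $G$ virtually soluble, contrary to $(\star)$. (Equivalently, (a) is the case $H=\rst_G(n)$, $K=\rst_G(v)$ of (b).)

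For (b) I would take $1\neq N\triangleleft H\leqf G$ with $N$ virtually soluble and argue for a contradiction in two cases. If $N$ is infinite, pass to the core $H_0=\bigcap_{g\in G}H^g$, which is normal of finite index in $G$, and set $N_1=N\cap H_0$; then $N_1\triangleleft H_0$, $N_1$ is virtually soluble, and $N_1$ is infinite (hence non-trivial) since $[N:N_1]\leq[H:H_0]<\infty$. Because $H_0\triangleleft G$, each of the finitely many $G$-conjugates of $N_1$ is a normal subgroup of $H_0$ lying inside $H_0$, so the normal closure $N_1^G$ is the product of these finitely many normal virtually soluble subgroups of $H_0$; a routine argument (using that the soluble radical of a virtually soluble group is soluble of finite index, and that virtually-soluble-by-finite and virtually-soluble-by-soluble groups are virtually soluble) shows such a product is virtually soluble. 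Thus $N_1^G$ is a non-trivial virtually soluble normal subgroup of $G$, against $(\star)$. If instead $N$ is finite, then $C_H(N)$ has finite index in $H$ (as $\Aut N$ is finite) and is normal in $H$, so its core $D=\bigcap_{g\in G}C_H(N)^g$ is normal of finite index in $G$; by Theorem \ref{slava} we can choose $m$ with $\rst_G(m)'\leq D$ and, enlarging $m$ and using $\rst_G(m)'\leq\rst_G(m')'$ for $m\geq m'$, also $\rst_G(m)'\leq N^G$ (as $N^G\supseteq N\neq1$). Since $C_H(N)^g=C_{H^g}(N^g)\leq C_G(N^g)$ for all $g$, we get $D\leq C_G(N^G)$, so $\rst_G(m)'\leq D\cap N^G$ both lies in and centralizes $N^G$; hence $\rst_G(m)'$ is abelian, $\rst_G(m)$ is metabelian and of finite index in $G$, and $G$ is virtually soluble -- against $(\star)$ once more.

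The hard part is the infinite case of (b): the difficulty is exactly that $N$ is only normal in $H$, not in a normal subgroup of $G$, so I must first descend to $N_1\triangleleft H_0$ with $H_0$ \emph{normal} in $G$ in order for the finitely many conjugates of $N_1$ to lie inside a common group where their product is controlled; the auxiliary fact that a product of finitely many normal virtually soluble subgroups of any group is virtually soluble is elementary but needs to be stated and proved. A lesser obstacle is the bookkeeping in the finite case when choosing the level $m$, where Theorem \ref{slava} is used twice and one relies on $\rst_G(m)'$ decreasing with $m$.
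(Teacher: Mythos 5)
Your argument is correct, and it reaches the conclusion by a genuinely different route from the paper's. The paper funnels everything through its first assertion (no non-trivial abelian $K$ with $K\triangleleft K^G\triangleleft G$): for (a) it extracts an abelian normal subgroup of a virtually soluble $\rst_G(v)$ and applies that criterion; for the main case of (b) it passes to the last term of the derived series of the soluble radical of $K$, obtaining an \emph{abelian} normal subgroup $A$ of $H$, and then intersects $A$ with the core $M=\bigcap_g H^g$ to apply the criterion again. You instead funnel everything through your statement $(\star)$, which you deduce from the weaker-looking fact (recorded in the introduction) that $G$ has no non-trivial virtually abelian normal subgroups; your (a) then follows from the direct-product decomposition of $\rst_G(n)$ and extension-closure of virtual solubility, and your infinite case of (b) shows directly that $(N\cap H_0)^G$ is a virtually soluble normal subgroup of $G$, using that the conjugates of $N\cap H_0$ are all normal in the single group $H_0$ so that their join is virtually soluble. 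The price of your route is the auxiliary closure facts (soluble radical of a virtually soluble group is soluble of finite index; extension-closure of the class of virtually soluble groups), but these are exactly the properties the paper itself invokes without proof just before Lemma~\ref{general X}, so nothing is out of line. Your finite case is close to the paper's but slightly slicker: where the paper proves $K^G$ is finite and then uses residual finiteness to find $L\normf G$ with $L\cap K^G=1$, you observe directly that $\rst_G(m)'\leq D\cap N^G$ with $D$ centralizing $N^G$, so $\rst_G(m)'$ is abelian; this avoids the residual-finiteness step. Both treatments are valid; the paper's has the aesthetic advantage of resting only on the single hypothesis stated in the lemma's first sentence, while yours isolates a reusable intermediate statement $(\star)$ and avoids manufacturing abelian subgroups by hand.
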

\begin{proof}
The first assertion is \cite[Lemma 2]{geomded}.

(a)  The subgroup $\rst_G(v)$ is infinite since the direct product of its finitely many conjugates is some subgroup $\rst_G(n)$ and this has finite index in $G$.  Thus
if $\rst_G(v)$ is virtually soluble, then it must have a non-trivial abelian normal subgroup $K$.  Since then $K\triangleleft K^G\triangleleft G$ this gives a contradiction. 

(b) Now let $H\leqf G$. 
 First we claim that if $K$ is a finite normal subgroup of $H$ then $K=1$.
Let $C={\rm C}_H(K)$; then, since ${\rm N}_H(K)/C$ embeds in $\Aut(K)$, we have $C\leqf G$ and so $D\normf G$ where $D=\bigcap_{g\in G}C^g$.
 Moreover $D$ centralizes $K^G$, thus $D\cap K^G$ is an abelian normal subgroup of $G$.
  From above $D\cap K^G=1$ and so $K^G$ is finite. 
 If $K\neq1$ then by Theorem \ref{slava} we have $\rst_G(n_1)'\leq K^G$ for some $n_1$.   
Because $G$ is residually finite, there is some $L\normf G$ with $L\cap K^G=1$, and 
we have $\rst_G(n_2)'\leq L$ for some $n_2$. 
 But then $\rst_G(n)$ must be abelian where $n=\max(n_1,n_2)$,  and this is a contradiction.

Finally suppose that $1\neq K\triangleleft H$ and that $K$ is virtually soluble.  The soluble normal subgroup $K_0$ of $K$ of smallest index is normal in $H$ and non-trivial from above,
and the last non-trivial term $A$ of the derived series of $K_0$ is an abelian normal subgroup of $H$.
From above, $A$ is infinite and so $A\cap M$ is a non-trivial abelian normal subgroup of $M$ where
$M=\bigcap_{g\in G}H^g$.  But this gives another contradiction to the first assertion of the lemma.
\end{proof}

Our next result is a variant of an argument in \cite{Jsfirst}.  In our application of it we take for $\fX$ the class of virtually soluble groups.  This is evidently quotient-closed and it is also extension-closed.

\begin{lem}\label{general X} Let $\fX$ be a class of groups that is closed for quotients and extensions, and let $H\normf G$.
If each ascending chain $(K_i)_{i\in\NN}$ of normal subgroups of $G$ has at most $c$
factors $K_i/K_{i-1}$ that are not in $\fX$, then each ascending chain of 
normal subgroups of $H$ has at most $c\cdot2^{|G:H|-1}$ factors that are not in $\fX$.
\end{lem}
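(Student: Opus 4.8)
The plan is to reduce from $H\normf G$ to the case $|G:H|=2$ by an induction on $|G:H|$, so the heart of the matter is the following claim: if $H\normf G$ with $|G:H|=2$ and every ascending chain of normal subgroups of $G$ has at most $c$ factors not in $\fX$, then every ascending chain of normal subgroups of $H$ has at most $2c$ factors not in $\fX$. Given such an ascending chain $(K_i)$ of normal subgroups of $H$, I would pass to the normal core in $G$: set $N_i=\bigcap_{g\in G}K_i^g$. Since $|G:H|=2$ this is just $N_i=K_i\cap K_i^g$ for a fixed $g\in G\setminus H$, so $(N_i)$ is an ascending chain of normal subgroups of $G$ and hence has at most $c$ factors outside $\fX$. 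The point is then to control the ``defect'' $K_i/N_i$. Because $K_i/N_i$ embeds in $K_i/(K_i\cap K_i^g)\cong K_iK_i^g/K_i^g$, which is a section of $H/N_i$ on which $g$ acts, one sees $K_i$ is built from $N_i$ by an extension whose kernel-to-top data is governed by the $\ZZ/2$-action; in particular $K_iK_i^g$ is normal in $G$ and $K_iK_i^g/N_i$ is (centreless-free enough that) a chain of such quotients again has $\leq c$ bad factors. Interleaving the two chains $(N_i)$ and $(K_iK_i^g)$ with $(K_i)$ and using that $\fX$ is extension- and quotient-closed, each ``bad'' factor $K_i/K_{i-1}$ forces a bad factor in one of the two $G$-chains, giving the bound $2c$.

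Next I would run the induction on $|G:H|$. If $H\normf G$ is not normal-maximal-below-$G$, pick an intermediate normal subgroup; more robustly, since $H\normf G$ we may choose a chain $H=H_0\normf H_1\normf\cdots\normf H_k=G$ in which each $|H_{j}:H_{j-1}|$ is prime — but primes larger than $2$ are a problem, so instead I would only use a chain refining through subgroups of $G$ containing $H$ and argue directly. The clean route: write $M=\bigcap_{g\in G}H^g\normf G$; then $H/M\normf G/M$ and $|G/M:H/M|$ divides $|G:H|!$, which does not obviously help. The better approach, matching the exponent $2^{|G:H|-1}$ in the statement, is an induction where at each step we split off one coset: choose $H\leq H_1\leq G$ with $H\normf H_1$ and $|H_1:H|=2$ whenever $H\neq H_1$, apply the $|G:H_1|<|G:H|$ case to get the bound $c\cdot 2^{|G:H_1|-1}$ for chains of normal subgroups of $H_1$, then apply the index-$2$ claim to pass from $H_1$ down to $H$, doubling the constant. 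Iterating, after $|G:H|-1$ doublings (and handling odd prime indices by a direct generalisation of the index-$2$ argument with $\ZZ/p$ replaced by the relevant finite group, which still only doubles because a chain of $G$-normal subgroups still has $\leq c$ bad factors) one reaches $c\cdot 2^{|G:H|-1}$.

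The main obstacle I expect is making the index-$2$ (more generally, one-step) argument genuinely rigorous: one must show that if $K\normf H$ and $N=\bigcap_{g\in G}K^g$, then the ``correction'' from $N$ up to $K$ can be absorbed into at most one extra bad factor per step in a $G$-chain, uniformly along the whole ascending chain $(K_i)$. The subtlety is that the chain $(N_i)$ need not interleave nicely with $(K_iK_i^g)$, so I would instead form the single ascending $G$-chain obtained by listing, for each $i$, the terms $N_i\leq N_iK_{i-1}^G$-type closures — concretely, track the two chains $(N_i)_i$ and $(K_i^G)_i=(K_iK_i^g)_i$ simultaneously and observe $N_i\leq K_i\leq K_i^G\leq N_{i+1}^{\,?}$ fails, so the correct bookkeeping is: each bad $K_i/K_{i-1}$ injects into $(K_i^G/K_{i-1}^G)\times(\text{something involving }N_i)$, and since both $(K_i^G)$ and $(N_i)$ are $G$-chains with $\leq c$ bad factors, and $\fX$ is extension-closed, we get $\leq 2c$. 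Pinning down this last injection and the extension-closure bookkeeping is where the real work lies; everything else is formal.
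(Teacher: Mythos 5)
Your index-$2$ case is correct and is in fact exactly the paper's argument specialised to a two-element transversal: with $S=\{1,g\}$ one has $K_i\cap K_i^g=\bigcap_{x\in G}K_i^x$, both $K_iK_i^g$ and $K_i\cap K_i^g$ are normal in $G$, and the modular-law observation (if $A_2B_2/A_1B_1$ and $(A_2\cap B_2)/(A_1\cap B_1)$ lie in $\fX$ then so does $A_2/A_1$) finishes it. The gap is everything beyond index $2$. Your one-step claim --- that passing from $H_1$ down to $H$ with $H_1/H$ of odd prime order or simple "still only doubles" --- is not established, and the mechanism you sketch for it fails: if $S$ is a transversal to $H$ in $H_1$ and $S'=S\setminus\{1\}$, the subgroup you need to control alongside $K_i^{S}$ is $K_i\cap K_i^{S'}$, which when $|S'|\geq 2$ is neither the core $\bigcap_{g}K_i^g$ nor normal in $H_1$ (let alone $G$). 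So the proposed "injection of $K_i/K_{i-1}$ into $(K_i^G/K_{i-1}^G)\times(\hbox{something involving }N_i)$" has no second factor coming from a $G$-chain, and the hypothesis about chains of normal subgroups of $G$ cannot be applied to it directly. You flag this yourself as "where the real work lies", but it is not bookkeeping: it is the entire content of the lemma, and it is why the bound is exponential in $|G:H|$ rather than in the composition length of $G/H$ (your scheme, if it worked, would prove the stronger bound $c\cdot 2^{\ell}$ with $\ell$ the composition length, which should make you suspicious).

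The paper resolves this by a different induction: not over a chain of intermediate subgroups, but over subsets $S$ of a fixed transversal. Call $S$ \emph{good} if every chain $(K_i^S)$ arising from an ascending chain of $H$-normal subgroups has at most $c\cdot 2^{|G:H|-|S|}$ bad factors; a full transversal is good because $K_i^S$ is then normal in $G$. To shrink a good set $S$ to $S'=S\setminus\{1\}$ one sets $K_i^\ast=K_i\cap K_i^{S'}$ and uses the identities $K_i^{S'}(K_i^{\ast S'}K_i)=K_i^S$ and $K_i^{S'}\cap(K_i^{\ast S'}K_i)=K_i^{\ast S}$; the crucial point is that $(K_i^\ast)$ is a \emph{new} ascending chain of $H$-normal subgroups, and its $S$-closure $(K_i^{\ast S})$ is controlled by the goodness of the \emph{larger} set $S$, not by the hypothesis on $G$. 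That is where the factor of $2$ per removed transversal element (hence $2^{|G:H|-1}$ in total) comes from. To repair your write-up you would need either to adopt this recursion over subsets of a transversal, or to supply a genuinely new argument controlling the chain $\bigl(K_i\cap K_i^{S'}\bigr)_i$ for $|S'|\geq 2$; as it stands the proposal does not contain one.
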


\begin{proof}  
We begin by showing that if $A_1,A_2,B_1,B_2$ are normal subgroups of $H$ with 
$A_1\leq A_2$ and $B_1\leq B_2$, and if both $A_2B_2/A_1B_1$, $(A_2\cap B_2)/(A_1\cap B_1)$ are in $\fX$, then $A_2/A_1$ is in $\fX$. 
The quotient $A_2B_2/A_1B_2$ is in $\fX$ as it is isomorphic to a quotient of $A_2B_2/A_1B_1$. 
Similarly, $(A_2\cap B_2)/ (A_1\cap B_2)$ is also in $\fX$. 
The claim follows since $A_2/A_1$ is an extension of $A_2/(A_2\cap A_1B_2)\cong A_2B_2/A_1B_2$ by $(A_2\cap A_1B_2)/A_1 \cong (A_2\cap B_2)/(A_1\cap B_2)$.

Write $n=|G:H|$.  For the purposes of this proof, we call a non-empty finite subset $S$ of $G$ {\em good} if whenever $(K_i)$ is an ascending chain of normal subgroups of $H$ then the chain
$(K_i^S)$ has at most $c\cdot2^{n-|S|}$ factors that are not in $\fX$.   Thus if $S$ is a transversal 
to $H$ in $G$ then $S$ is good, and our conclusion holds if and only if $\{1\}$ is good.      
Choose a good set $S$ of smallest cardinality.  Then each set $Ss_0^{-1}$ with $s_0\in S$ is good, and so we may assume that $1\in S$.   Suppose that $S\neq\{1\}$ and write
$S'=S\setminus\{1\}$;
 thus $S'\neq\emptyset$.  Let $(K_i)$ be an ascending chain of normal subgroups of $H$ and write $K_i^\ast=K_i\cap K_i^{S'}$ for each $i$.
Since $S$ is good, the set $J$ of indices $j$ for which either $K_j^S/K_{j-1}^S$ or
$K_j^{\ast S}/K_{j-1}^{\ast S}$ is not in $\fX$ has at most
$2(c\cdot 2^{n-|S|})=c\cdot  2^{n-|S'|}$ elements.
However it is easy to check that
$$K_i^{S'}(K_i^{\ast\,S'}K_i)=K_i^S\quad\hbox{and}\quad K_i^{S'}\cap(K_i^{\ast\,S'}K_i) =K_i^{\ast\,S}$$ for each $i$.  Therefore, for the indices $i\notin J$, the claim in the first paragraph yields that $K_i^{S'}/K_{i-1}^{S'}\in \fX$.
But this shows that the set $S'$ is good, and the result follows from this contradiction.
 \end{proof}
 
This lemma has the following consequence which will be necessary for the proof of Theorem \ref{main}.
 
 \begin{lem}\label{short chains}  Let $G$ be a branch group acting on a tree $T$, and $H\normf G$.   Then in any series of normal subgroups of $H$
there are at most $2^{|G:H|-1}$ factors that are not virtually soluble.
 \end{lem}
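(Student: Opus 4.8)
The plan is to deduce this at once from Lemma \ref{general X}, applied with $\fX$ the class of virtually soluble groups (which, as noted just before that lemma, is closed under quotients and under extensions). Since the target bound $2^{|G:H|-1}$ equals $c\cdot 2^{|G:H|-1}$ with $c=1$, what I need is the hypothesis of Lemma \ref{general X} for this $\fX$ and this value of $c$: namely, that every ascending chain $(K_i)_{i\in\NN}$ of normal subgroups of $G$ has at most one factor $K_i/K_{i-1}$ that fails to be virtually soluble. A finite normal series of $H$ is a special case of an $\NN$-indexed ascending chain (repeat the top term), so once the hypothesis is checked the lemma will follow.

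To verify the hypothesis for $G$, let $(K_i)$ be an ascending chain of normal subgroups of $G$ and consider an index $i$ with $K_{i-1}\neq 1$. Theorem \ref{slava} gives $\rst_G(n)'\leq K_{i-1}$ for some $n$. But $\rst_G(n)$ is normal of finite index in $G$ and $\rst_G(n)/\rst_G(n)'$ is abelian, so $G/\rst_G(n)'$ is virtually abelian; hence its quotient $G/K_{i-1}$ is virtually soluble, and therefore so is the section $K_i/K_{i-1}$. Thus the only factor of the chain that can fail to be virtually soluble is the unique one (if it exists) of the form $K_j/K_{j-1}$ with $K_{j-1}=1\neq K_j$ — that is, the first jump off the trivial subgroup. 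So at most one factor is non-virtually-soluble, which is exactly the input Lemma \ref{general X} requires.

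I do not anticipate a genuine obstacle here: the substantive work is already packaged in Grigorchuk's normal subgroup theorem (Theorem \ref{slava}) and in the combinatorial Lemma \ref{general X}. The only care needed is bookkeeping: that $\fX$ is extension-closed, so that Lemma \ref{general X} is applicable (granted in the text), and that a finite series may legitimately be read as an $\NN$-indexed chain so that the conclusion of the lemma transfers from chains to series. The one small idea specific to this lemma is the observation, via Theorem \ref{slava}, that the constant for $G$ can be taken to be $c=1$.
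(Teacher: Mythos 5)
Your proof is correct and is essentially the paper's own argument: both apply Lemma \ref{general X} with $c=1$ and $\fX$ the virtually soluble groups, using Theorem \ref{slava} to see that every proper quotient of $G$ is virtually abelian, so that at most the first nontrivial jump in a chain of normal subgroups of $G$ can fail to be virtually soluble. You simply spell out this verification in more detail than the paper does.
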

 
 \begin{proof}  By Theorem \ref{slava}, every proper quotient of $G$ is virtually abelian.
Therefore the result holds from Lemma \ref{general X}, with $c=1$ and $\fX$ the class of
virtually soluble groups.
\end{proof}

We are now ready to prove the main theorem. For the reader's convenience, we present an important special case separately.

\begin{lem}\label{all rst big} Let $G$ be a branch group acting on a tree $T$, and $K\triangleleft H\leqf G$. 
If $K\cap \rst_G(v)>1$ for all vertices $v$ in $T$ then $\rst_G(n)'\leq K$ for some $n$.
\end{lem}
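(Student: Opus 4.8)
\emph{Proof plan.} The hypothesis in fact forces $K\cap\rst_G(v)$ to be \emph{infinite} for every vertex $v$: if $v$ has level $r$ then for each $n>r$ the subgroups $K\cap\rst_G(w)$ with $w\in L_n\cap T_v$ are non-trivial and generate their direct product inside $K\cap\rst_G(v)$, and the number $m_rm_{r+1}\cdots m_{n-1}$ of these factors is unbounded. Hence, with $M=\bigcap_{g\in G}H^g\normf G$, the group $(K\cap M)\cap\rst_G(v)=(K\cap\rst_G(v))\cap M$ has finite index in an infinite group, so is non-trivial, for every $v$. Since $K\cap M\triangleleft M$ and a conclusion $\rst_G(n)'\le K\cap M$ would imply $\rst_G(n)'\le K$, we may replace $(H,K)$ by $(M,K\cap M)$ and assume $H\normf G$. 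We then induct on $|G:H|$, the case $H=G$ being Theorem~\ref{slava} applied to $K$.

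Assume $|G:H|>1$. The key device is descent to subtrees. For a vertex $v$ of level $r$ write $\mathrm{St}_G(v)$ for its stabiliser in $G$, let $\rho_v\colon\mathrm{St}_G(v)\to\Aut(T_v)$ be restriction, and put $G_v=\rho_v(\mathrm{St}_G(v))$. One checks that $G_v$ is again a branch group, acting on $T_v$: it is transitive on each layer of $T_v$, since an element of $G$ carrying one descendant of $v$ to another fixes their common level-$r$ ancestor $v$; and $\rst_{G_v}(m)\supseteq\rho_v(\rst_G(L_n\cap T_v))$ has finite index in $G_v$ for each $m$ (with $L_n$ the layer of $T$ at depth $m$ below $v$), because $\rst_G(L_n\cap T_v)\,\ker\rho_v\supseteq\rst_G(n)$. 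Moreover $\rho_v$ is injective on $\rst_G(v)$ and carries $\rst_G(u)$ into $\rst_{G_v}(u)$ for $u\in T_v$, so a containment $\rst_{G_v}(X)'\le\rho_v(K\cap\rst_G(v))$, with $X$ a layer of $T_v$, pulls back to $\rst_G(X)'\le K$. Finally, putting $\mathrm{St}_H(v)=H\cap\mathrm{St}_G(v)$: we have $K\cap\rst_G(v)\triangleleft\mathrm{St}_H(v)$ (an element of $\mathrm{St}_H(v)$ normalises $K$, lying in $H$, and normalises $\rst_G(v)$, fixing $v$), $\rho_v(\mathrm{St}_H(v))\leqf G_v$, and $\rho_v(K\cap\rst_G(v))$ meets every rigid stabiliser of $T_v$ non-trivially, as it contains $\rho_v(K\cap\rst_G(w))\neq1$ for each $w\in T_v$. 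So the triple $(G_v,\rho_v(\mathrm{St}_H(v)),\rho_v(K\cap\rst_G(v)))$ again satisfies the hypotheses, and the inductive hypothesis applies to it provided $|G_v:\rho_v(\mathrm{St}_H(v))|<|G:H|$; here $|G_v:\rho_v(\mathrm{St}_H(v))|\le|\mathrm{St}_G(v):\mathrm{St}_H(v)|$, which counts the cosets of $H$ meeting $\mathrm{St}_G(v)$ and hence is $<|G:H|$ exactly when the $H$-orbit of $v$ is a proper subset of its layer.

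By Remark~\ref{bounded orbits} fix $n_0$ with $H$ having a constant number $c$ of orbits on $L_n$ for $n\ge n_0$. If $c\ge2$, pick a representative $v_j$ of each $H$-orbit in $L_{n_0}$; each such orbit is proper, so by the above and induction there is a common layer $L_N$ with $\rst_G(L_N\cap T_{v_j})'\le K$ for all $j$. For arbitrary $v\in L_{n_0}$, write $v=v_jh$, $h\in H$; then $\rst_G(L_N\cap T_v)'=(\rst_G(L_N\cap T_{v_j})')^h\le K^h=K$, and as the subtrees $T_v$ $(v\in L_{n_0})$ partition $L_N$ we get $\rst_G(N)'\le K$. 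If $c=1$, then $H$ is transitive on every layer and so is itself a branch group on $T$. If $\ker\rho_v\not\le H$ for some $v$, then $\rho_v(\mathrm{St}_H(v))=\rho_v(\mathrm{St}_H(v)\ker\rho_v)$ has index strictly below $|\mathrm{St}_G(v):\mathrm{St}_H(v)|\le|G:H|$ in $G_v$, the descent applies at $v$, and transitivity of $H$ on the layer of $v$ finishes as before. Otherwise $\ker\rho_v\le H$ for all $v$; taking $v$ over $L_1$ this forces $\rst_G(1)\le H$, hence $\rst_H(u)=\rst_G(u)$ for every non-root $u$ and $\rst_H(n)=\rst_G(n)$ for all $n\ge1$; then Theorem~\ref{slava} applied to $K\triangleleft H$ inside the branch group $H$ gives $\rst_G(m)'=\rst_H(m)'\le K$.

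The main obstacle I anticipate is the content of the second paragraph: verifying carefully that $G_v$ is a branch group on $T_v$ whose rigid stabilisers are the isomorphic images of those of $G$, so that the descent is valid, and organising the case split — transitive versus intransitive action on the layers, and, in the transitive case, whether $\ker\rho_v\le H$ — so that each branch of the induction genuinely decreases $|G:H|$. The recombination over the finitely many $H$-orbits is then bookkeeping.
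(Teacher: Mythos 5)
Your argument is correct, but it takes a genuinely different route from the paper's. You prove the lemma by induction on $|G:H|$ (after reducing to $H\normf G$ via the nice observation that the hypothesis forces each $K\cap\rst_G(v)$ to be infinite), descending to the sections $G_v=\rho_v(\mathrm{St}_G(v))$ acting on subtrees, with Grigorchuk's Theorem~\ref{slava} as the base case; the paper instead argues directly with a commutator trick: choosing $n_0$ as in Remark~\ref{bounded orbits}, it picks $k\in K$ displacing some vertex $u$ below a given orbit representative $v$, notes that for $x,y\in\rst_G(u)$ one has $[x,y]=[x,[k,y]]\in K$ because $y^k$ lies in $\rst_G(uk)$ and hence commutes with $x$, concludes $\rst_G(u)'\leq K$, and then sweeps this over the finitely many $H$-orbits by conjugation. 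The paper's route is much shorter and entirely elementary, and it needs no verification that the sections are branch groups; yours is heavier on machinery but conceptually clean, reducing everything to the normal-subgroup theorem for branch groups, and all the steps I checked (the branch property of $G_v$, injectivity of $\rho_v$ on $\rst_G(v)$, the index computation $|\mathrm{St}_G(v):\mathrm{St}_H(v)|=|G:H|\cdot|vH|/|L_r|$, and the pull-back of $\rst_{G_v}(X)'\leq\rho_v(K\cap\rst_G(v))$ to $\rst_G(X)'\leq K$) go through. One small point to tidy: in the $c=1$ case you assert up front that layer-transitivity makes $H$ a branch group on $T$; as stated this needs the extra (true, but unproved) observation that a finite-index subgroup of the direct product $\rst_G(n)=\prod_v\rst_G(v)$ contains $\prod_v(H\cap\rst_G(v))$ with finite index. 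Since you only invoke the branch property of $H$ in the sub-case where $\ker\rho_v\leq H$ for all $v$, where you have already shown $\rst_H(n)=\rst_G(n)$ and the finite-index condition is immediate, this is a presentational wrinkle rather than a gap, but you should either defer the claim to that sub-case or supply the one-line justification.
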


\begin{proof}  Choose $n_0$ as in Remark \ref{bounded orbits} and also with $\rst_G(n_0)'\leq H$.
Let $X_1,\dots,X_r$ be the orbits of $H$ on $L_{n_0}$.

Fix $i$ and let $v$ be a vertex in $X_i$.
Choose $k\in K$ and $m_i\geq0$ such that $k$ does not fix
the $m_i$th layer in $T_v$; choose $u$ in this layer with $uk\neq u$.
 We claim that $\rst_G(u)'\leq K$.  
Let $x,y\in\rst_G(u)$; thus
$y^k\in\rst_G(uk)$ and $[x,y^k]=1$.  
Hence $[x,[k,y]]= [x,(y^k)^{-1}y]= [x,y]$.
 Since $[x,[k,y]]\in K$ our claim follows. 
 
 Because $H$ normalizes $K$ it follows that $K$ contains $\rst_G(Y)'$ where
$Y$ is the $H$-orbit in $L_{n_0+m_i}$ containing $u$.
  Arguing thus for each $i$, we conclude that $\rst_G(n)'\leq K$ where $n=n_0+\max(m_1,\dots,m_r)$.  
\end{proof}

\begin{proof}[Proof of Theorem  {\rm \ref{main}}]    We begin by noting that the statements in (2) imply equation (1).  If (2) holds then  we have
$$\begin{array}{rl} K\cap\rst_G(n)'&=K\cap (\rst_G(X)'\times \rst_G(L_n\setminus X)'\,)\\
&= \rst_G(X)'\times(K\cap \rst_G(L_n\setminus X)').\end{array}$$
From (2), the second factor here is trivial.  

Therefore it suffices to prove (2).  For each normal subgroup $K$ of $H$, every chain of 
normal subgroups of $H$ from $K$ to $H$ with no virtually soluble factors has length 
bounded independently of $K$, by Lemma \ref{short chains}.  Write $j(K)$ for the maximal length of such a chain.  

Suppose that the result is false. 
Pick a subgroup $K$ demonstrating this and with $j(K)$ as small as possible.  
By Lemma \ref{all rst big} we may assume $\rst_G(v)\cap K=1$ for some vertex $v$.  
Since $H$ contains a normal subgroup of finite index, it contains $\rst_G(n_1)'$ for some $n_1\in\NN$.  Let $w$ be 
a vertex in $T_v$, and with $w\in L_{n_2}$ where $n_2\geq n_1$. 
Let $X_1$ be the $H$-orbit of $w$.  

The subgroups $\rst_G(w)'$ and $K\cap\rst_G(n_2)'$ are disjoint 
and both normal in $\rst_G(n_2)$, and hence they commute.  But $K\triangleleft H$; so
$\rst_G(u)'$ and $K\cap\rst_G(n_2)'$ commute for all $u\in X_1$, and hence
$\rst_G(X_1)'$ and $K\cap\rst_G(n_2)'$ commute.  Therefore 
$K$ and $\rst_G(X_1)'$  intersect in a
virtually abelian normal subgroup of $H$, so intersect trivially and commute by Lemma \ref{notvsol}.
It follows that these
subgroups generate their direct product $K_1$.  Since $K_1\triangleleft H$ and $K_1/K\cong \rst_G(X_1)'$, which is not virtually soluble by Lemma \ref{notvsol} (a), we have $j(K_1)<j(K)$.  Therefore we can find some $n_0\geq n_2$ such that for every $n\geq n_0$ there is a
union $X_2$ of $H$-orbits of $L_n$ satisfying 
$$\rst_G(X_2)'\leq K\times \rst_G(X_1)'\quad\hbox{and}\quad (K\times (\rst_G(X_1)')\cap \rst_G (L_n\setminus X_2)=1.\eqno(\ast)$$ 
Let $Y_1$ be the set of descendants of $X_1$ in $L_n$; thus
$\rst_G(Y_1)'\leq \rst_G(X_1)'$ and hence $Y_1\subseteq X_2$.  Let $X=X_2\setminus Y_1$;
this is a union of $H$-orbits of $L_n$.  The first inequality in $(\ast)$ gives 
$$\rst_G(X)'\leq \rst_G(Y_1)'\times K,$$ and 
since $\rst_G(X)$, $\rst_G(Y_1)$ commute it follows that
$$(\rst_G(X))''\leq[ K\times\rst_G(Y_1)',\rst_G(X)]\leq[K,\rst_G(X)]\leq K.$$   
Thus the image of $\rst_G(X)'$ under the projection map
$\rst_G(Y_1)'\times K\to \rst_G(Y_1)'$ is abelian and $H$-invariant, and so is trivial from
Lemma \ref{notvsol}.  Hence $\rst_G(X)'\leq K$.  
The second inequality  in $(\ast)$ gives $K\cap \rst_G (L_n\setminus X_2)=1$; thus $K\cap \rst_G(n)$
commutes with $\rst_G(L_n\setminus X_2)$ as well as $\rst_G(Y_1)$, and so commutes with 
$\rst_G(L_n\setminus X)$.  The result follows.  
\end{proof}
We can now strengthen Lemma \ref{short chains}.

\begin{cor}\label{shorter chains}  Let $G$ be a branch group acting on a tree $T$, and $H\leqf G$.   Then in any series of normal subgroups of $H$
there are at most $|G:H|$ factors that are not virtually abelian.
 \end{cor}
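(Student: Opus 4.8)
The plan is to read the corollary off directly from Theorem~\ref{main}: the ``large part'' of each term in a chain of normal subgroups of $H$ is pinned down by a union of $H$-orbits in a \emph{single} fixed layer of $T$, these unions can only grow along the chain, and a layer of $T$ carries at most $|G:H|$ $H$-orbits by Remark~\ref{bounded orbits}; the remaining factors will turn out to be virtually abelian.

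First I would reduce to a finite chain $K_0\leq K_1\leq\cdots\leq K_m$ of normal subgroups of $H$: if some series of normal subgroups of $H$ had more than $|G:H|$ non-virtually-abelian factors, then finitely many of these already witness it, and the subgroups involved form such a finite subchain. Applying Theorem~\ref{main} to each $K_i$ and choosing an integer $n$ exceeding the finitely many thresholds thereby produced, I obtain for every $i$ a union $X_i$ of $H$-orbits in $L_n$ with $K_i\cap\rst_G(n)'=\rst_G(X_i)'$ and $\rst_G(X_i)'\leq K_i$. Since $\rst_G(X)'=\prod_{v\in X}\rst_G(v)'$, each factor $\rst_G(v)'$ is non-trivial by Lemma~\ref{notvsol}(a), and a non-trivial element of $\rst_G(v)'$ is supported inside $T_v$, which is disjoint from the subtrees of the other vertices of $L_n$. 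Hence the assignment $X\mapsto\rst_G(X)'$ reflects inclusions, so $K_{i-1}\leq K_i$ forces $X_{i-1}\subseteq X_i$. As $L_n$ has at most $|G:H|$ $H$-orbits, the ascending chain $X_0\subseteq X_1\subseteq\cdots\subseteq X_m$ admits at most $|G:H|$ strict inclusions.

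It then remains to check that whenever $X_{i-1}=X_i$ the factor $K_i/K_{i-1}$ is virtually abelian. In that case $K_i\cap\rst_G(n)'=\rst_G(X_i)'=\rst_G(X_{i-1})'=K_{i-1}\cap\rst_G(n)'\leq K_{i-1}$, so $K_i/K_{i-1}$ is a quotient of $K_i/(K_i\cap\rst_G(n)')$, which is isomorphic to a subgroup of $G/\rst_G(n)'$. Now $\rst_G(n)'\triangleleft G$ because $G$ permutes $L_n$, and $\rst_G(n)/\rst_G(n)'$ is an abelian normal subgroup of finite index in $G/\rst_G(n)'$; thus $G/\rst_G(n)'$ is virtually abelian, and virtual abelianness passes to subgroups and to quotients. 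Hence $K_i/K_{i-1}$ is virtually abelian, and the factors that fail this occur only among the at most $|G:H|$ indices $i$ with $X_{i-1}\subsetneq X_i$, which gives the corollary.

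The step to handle with care is the choice of the single layer $L_n$ uniformly along the chain: Theorem~\ref{main} produces a layer depending on the subgroup, so one must first pass to a finite subchain before selecting $n$. Granting that, the rest is bookkeeping on subsets of $L_n$ fed by the structural input of Lemma~\ref{notvsol} and, through Theorem~\ref{main} and Theorem~\ref{slava}, the near-abelianness of $G/\rst_G(n)'$; note in particular that normality of $H$ in $G$ is not used, only $H\leqf G$.
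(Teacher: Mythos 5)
Your proof is correct and follows essentially the same route as the paper: Theorem~\ref{main} attaches to each term of the chain a union of $H$-orbits in a layer, these unions can only grow, and they do so at most $|G:H|$ times, while at every other step the factor embeds in a quotient of $G/\rst_G(n)'$, which is virtually abelian. The only organizational difference is that you first pass to a finite subchain so as to work in a single layer $L_n$ and compare the sets $X_i$ directly, whereas the paper keeps the (possibly infinite) series and instead tracks the maximized orbit count $o(X_i)$ across different layers; both devices are sound.
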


\begin{proof}  
For an $H$-invariant subset $X$ of a layer $L_n$ of $T$ we write $o(X)$ for the number of
orbits of $H$ on $X$.

Let $(K_i)_{i\in\mathbb{N}}$ be an ascending series of normal subgroups of $H$.
Then, by Theorem \ref{main}, for each $i$ there exist some $n_i$ and some union $X_i$ of $H$-orbits in $L_{n_i}$ such that $\rst_G(n_i)'\leq H$ and $\rst_G(n_i)'\cap K_i=\rst_G(X_i)'$.
Moreover $n_i$ can be chosen so that $o(X_i)$ is as large as possible (by Remark \ref{bounded orbits}). 
Clearly $o(X_{i-1})\leq o(X_i)$ for each $i$. 
 Thus there are at most $|G:H|$ indices $i$ for which $o(X_{i-1})<o(X_i)$.  
Suppose that $j\in \NN$ is not one of these indices. 
 We claim that the quotient $K_j/K_{j-1}$ is virtually abelian. 
To see this, let $n\geq n_{j-1},n_j$, replace $X_{j-1}$ and $X_j$ by their respective sets of descendants in $L_n$, set $R=\rst_G(n)'$ and notice that
$$\rst_G(X_{j-1})'=R\cap K_{j-1}\leq R\cap K_j=\rst(X_j)'.$$
Since $o(X_{j-1})=o(X_j)$, equality holds above.
Therefore the kernel of the obvious homomorphism from $K_j$ to $H/RK_{j-1}$ equals
$K_j\cap(RK_{j-1})=K_{j-1}$.  By Theorem \ref{slava}, the quotient $H/R$ is virtually abelian, and so therefore are $H/RK_{j-1}$ and $K_j/K_{j-1}$.
Our claim follows, and the result is proved.

\end{proof}

\section{An application: abstract commensurability}\label{application}

We recall that two groups $G_1$, $G_2$ are said to be abstractly commensurable if 
they have isomorphic subgroups of finite index.   Many branch groups $G$ have the property that $G$ is abstractly commensurable with the direct product of $n$ copies of $G$ for some integer $n>1$.  Given such a group $G$, it is natural to ask for which integers $n$ this holds.  Here we address a natural extension of this question.                                                                                               
 \medskip

\noindent {\bf Definition.} For an infinite group $H$, let $b(H)$ be the largest number
$r$ such that $H$ has $r$ infinite normal subgroups that generate their direct product in $H$;
if no such $r$ exists write $b(H)=\infty$. \medskip

If $H$ is a subgroup of finite index in a branch group (or if $H$ is any infinite group having no non-trivial finite normal subgroups) then $b(H)$ is the maximal number of factors in an irredundant subdirect product decomposition of $H$. 

\begin{cor}\label{invariant factors}
Let $G$ be a branch group acting on a tree $T$ and let $H\leqf G$. Then $b(H)$ is finite and is the maximum number of $H$-orbits on any layer of $T$.
\end{cor}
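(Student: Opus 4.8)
The plan is to prove Corollary \ref{invariant factors} by establishing the two inequalities: first that $b(H)$ is at least the maximum number of $H$-orbits on any layer, and then that it is at most this number (which in particular shows $b(H)$ is finite).

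For the lower bound, let $m$ denote the maximum number of $H$-orbits on any layer of $T$; by Remark \ref{bounded orbits}, this maximum is attained and stabilises for all $n\geq n_0$. Fix such an $n$ with also $\rst_G(n)'\leq H$, and let $X_1,\dots,X_m$ be the $H$-orbits on $L_n$. Each $\rst_G(X_i)'$ is normal in $H$ (since $H$ permutes the vertices of $X_i$ among themselves and normalises $\rst_G(n)'$), is infinite by Lemma \ref{notvsol}(a) (as $\rst_G(X_i)'$ surjects onto $\rst_G(v)'$ for $v\in X_i$, or more directly since $\rst_G(n)'$ has finite index in $G$), and the $\rst_G(X_i)'$ generate their direct product inside $\rst_G(n)'$ because the $\rst_G(X_i)$ lie in distinct direct factors of $\rst_G(n)=\prod_{v\in L_n}\rst_G(v)$. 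Hence $b(H)\geq m$.

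For the upper bound, suppose $N_1,\dots,N_r$ are infinite normal subgroups of $H$ generating their direct product $N=N_1\times\dots\times N_r$ in $H$. Apply Theorem \ref{main} to each $N_i\triangleleft H$: for all sufficiently large $n$ there is a union $X^{(i)}$ of $H$-orbits in $L_n$ with $N_i\cap\rst_G(n)'=\rst_G(X^{(i)})'$, and moreover $N_i$ commutes with $\rst_G(L_n\setminus X^{(i)})$. Choose a single $n$ that works simultaneously for all $i$ and with $\rst_G(n)'\leq H$. The key observation is that the sets $X^{(i)}$ must be pairwise disjoint: if a vertex $v$ lay in both $X^{(i)}$ and $X^{(j)}$ with $i\neq j$, then $N_i$ and $N_j$ would both have non-trivial intersection with $\rst_G(v)'$ (since $\rst_G(v)'$ is non-trivial, being non-abelian by Lemma \ref{notvsol}(a), and $\rst_G(X^{(i)})'$ projects onto $\rst_G(v)'$), contradicting $N_i\cap N_j=1$. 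Actually, to make this clean: $\rst_G(X^{(i)})'\cap\rst_G(X^{(j)})'$ is contained in $\rst_G(X^{(i)}\cap X^{(j)})'$, and also lies in $N_i\cap N_j=1$, so if $X^{(i)}\cap X^{(j)}\neq\emptyset$ we get $\rst_G(X^{(i)}\cap X^{(j)})'=1$, impossible since the rigid stabiliser of a single vertex is non-soluble.

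Now each $X^{(i)}$ is a non-empty union of $H$-orbits, and $N_i$ being infinite forces $X^{(i)}$ to be non-empty: if $X^{(i)}=\emptyset$ then $N_i\cap\rst_G(n)'=1$, so $N_i$ embeds in the finite group $H/\rst_G(n)'$, contradicting that $N_i$ is infinite. Since $N_i\mapsto X^{(i)}$ assigns to each of the $r$ factors a non-empty union of $H$-orbits in $L_n$, and these unions are pairwise disjoint subsets of $L_n$, there are at least $r$ distinct $H$-orbits in $L_n$. Hence $r\leq m$, giving $b(H)\leq m$ and in particular $b(H)<\infty$. The main obstacle is getting the disjointness argument precisely right, in particular making sure the projection/intersection manipulations with rigid stabilisers are valid and that one correctly invokes non-solubility of $\rst_G(v)$ (Lemma \ref{notvsol}(a)) rather than merely non-triviality; everything else is bookkeeping with Theorem \ref{main} and Remark \ref{bounded orbits}.
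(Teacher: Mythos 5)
Your overall strategy coincides with the paper's: the lower bound comes from the subgroups $\rst_G(X_i)'$ attached to the $H$-orbits on a sufficiently deep layer, and the upper bound comes from applying Theorem \ref{main} to each direct factor and extracting pairwise disjoint, non-empty unions of $H$-orbits. Your disjointness argument, via $\rst_G(X^{(i)})'\cap\rst_G(X^{(j)})'=\rst_G(X^{(i)}\cap X^{(j)})'\leq N_i\cap N_j=1$ together with non-solubility of $\rst_G(v)$, is valid and is essentially what the paper does (the paper works with possibly different layers $n_i$ and then picks one orbit $Y_i$ of descendants of each $X_i$ in a common layer, but this is the same idea).

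One step is wrong as written, however. You assert that if $X^{(i)}=\emptyset$ then $N_i$ embeds in ``the finite group $H/\rst_G(n)'$''. The group $H/\rst_G(n)'$ need not be finite: the branch axioms guarantee only that $\rst_G(n)$ has finite index in $G$; its derived subgroup can have infinite index, since the abelianization of $\rst_G(n)$ may be infinite (and $G$ is not assumed finitely generated). The same slip occurs in your parenthetical ``more directly since $\rst_G(n)'$ has finite index in $G$'' in the lower-bound paragraph, though there your primary justification (that $\rst_G(X_i)'$ contains $\rst_G(v)'$, which is infinite because $\rst_G(v)$ is not virtually soluble by Lemma \ref{notvsol}(a)) already suffices. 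The repair for the upper bound is to invoke the second part of (2) in Theorem \ref{main} with the \emph{full} rigid stabilizer: if $X^{(i)}=\emptyset$ then $N_i\cap\rst_G(L_n\setminus X^{(i)})=N_i\cap\rst_G(n)=1$, and since $\rst_G(n)$ genuinely has finite index in $G$, the subgroup $N_i\cap\rst_G(n)$ has finite index in $N_i$, forcing $N_i$ to be finite --- a contradiction. With that correction your argument is complete and matches the paper's.
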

\begin{proof}
By Theorem \ref{slava} and Remark \ref{bounded orbits}, there exist $n_0$ and some $m\leq|G:H|$ such that
$\rst_G(n_0)' \leq H$ and such that $H$ has $m$ orbits on $L_n$ for all $n\geq n_0$.
If $X_1,\dots,X_m$ are the orbits of
$H$ on $L_{n_0}$ then the $m$ subgroups $\rst_G(X_i)'$ are infinite
normal subgroups of $H$ and generate their direct product.

Now let $K_1,\ldots, K_r$ be infinite normal subgroups of $H$ that generate their direct product. By Theorem \ref{main}, for each $i$ we can find an integer $n_i\geq n_0$ and a union $X_i$ of $H$-orbits on $L_{n_i}$ such that $\rst_G(X_i)'\leq K_i$.
Let $n=\max \{n_1,\dots, n_r\}$ and for each $i$ choose an $H$-orbit $Y_i$ of $L_n$
consisting of descendants of elements of $X_i$. Obviously the sets $Y_i$ are disjoint and so $r\leq m$.
\end{proof}

\begin{lem}\label{mod p-1}
Let $T$ be the $p$-ary tree for some prime $p$ and let $H$ be a subgroup of $\Aut (T)$ that acts on each layer as a $p$-group.
Then the number of $H$-orbits on each layer is congruent to $1$ modulo $p-1$.
In particular, if $H$ is also a subgroup of finite index in a branch group on $T$, then $b(H) \equiv 1 \mod{p-1}$.  
\end{lem}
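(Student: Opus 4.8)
The plan is to count orbits by a Burnside-type / layer-by-layer argument, keeping track of congruences modulo $p-1$. Fix a layer $L_n$; since $H$ acts as a $p$-group on $L_n$, every $H$-orbit on $L_n$ has size a power of $p$, say $p^{k}$ with $k\geq0$. Working modulo $p-1$, each orbit contributes $p^k\equiv 1$, so $|L_n|=p^n\equiv (\text{number of }H\text{-orbits on }L_n)\pmod{p-1}$; but $p^n\equiv 1\pmod{p-1}$, giving immediately that the number of orbits is $\equiv1\pmod{p-1}$. So in fact no induction is needed: the key observation is simply that $|L_n|$ is itself a power of $p$ and each orbit has $p$-power size.

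To write this cleanly I would first record the elementary fact that if a finite $p$-group acts on a set of size $p^n$, then the number of orbits is congruent to $p^n$, hence to $1$, modulo $p-1$ — this is just summing orbit sizes $p^{k_1}+\dots+p^{k_t}=p^n$ and reducing each term mod $p-1$. Then apply this with the set $L_n$ (of size $p^n$ since $T$ is the $p$-ary tree) and the given action of $H$. That proves the first assertion for every layer.

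For the "in particular" clause, suppose additionally that $H$ is a subgroup of finite index in a branch group on $T$. By Corollary~\ref{invariant factors}, $b(H)$ is finite and equals the maximum number of $H$-orbits on any layer of $T$. By the first part of the lemma, that maximum is a number congruent to $1$ modulo $p-1$, so $b(H)\equiv1\pmod{p-1}$.

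The only point that needs slight care — and the thing I'd expect to be the "main obstacle," though it is minor — is making sure the hypothesis "$H$ acts on each layer as a $p$-group" is used correctly: it guarantees that the image of $H$ in $\mathrm{Sym}(L_n)$ is a finite $p$-group, so the orbit-size argument applies even though $H$ itself is infinite. Everything else is a one-line congruence computation together with a citation of Corollary~\ref{invariant factors}, so the proof should be short.
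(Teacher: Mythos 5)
Your proposal is correct and is essentially identical to the paper's own argument: each $H$-orbit on $L_n$ has $p$-power size, the orbit sizes sum to $p^n$, and reducing modulo $p-1$ gives that the number of orbits is congruent to $1$; the ``in particular'' clause then follows from Corollary~\ref{invariant factors}. No gaps.
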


\begin{proof}  This is elementary.  
Pick some layer $L_n$. Then the size of each $H$-orbit in $L_n$ is a power of $p$.  Writing $s_i$ for the number of orbits of size $p^i$ for $i=0,\ldots,n$ we have $p^n=s_0+ps_1+\cdots + p^ns_n$, so we obtain $1\equiv s_0+s_1+\cdots + s_n\mod{p-1}$.
\end{proof}

Using these results we show that direct products of certain branch groups on $p$-ary trees can only be (abstractly) commensurable if the numbers of direct factors are congruent modulo $p-1$. 

\begin{lem}\label{subdirect}
Let $\mathfrak{C}$ be the class of all groups $G$ with no non-trivial abelian normal subgroups and  with $b(G)$ finite.
Let $H_1,\ldots,H_n$ be groups in $\mathfrak{C}$,
and $H$ a subgroup of finite index in the direct product $D=H_1\times \cdots\times H_n$
such that each projection $\rho_j:H\rightarrow H_j$ is surjective.
 Then $b(H)=b(H_1)+\cdots +b(H_n)$.
\end{lem}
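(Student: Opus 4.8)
The plan is to prove the two inequalities $b(H)\le b(H_1)+\cdots+b(H_n)$ and $b(H)\ge b(H_1)+\cdots+b(H_n)$ separately, exploiting the hypothesis that the $H_i$ lie in $\mathfrak{C}$ so that all the normal subgroups in play have no non-trivial abelian normal subgroups (hence intersect trivially iff they commute, and have no non-trivial finite normal subgroups). First note that for each $j$ the projection $\rho_j$ is surjective, so $\rho_j(H)=H_j$ and $H$ is a subdirect product of the $H_j$; write $M_j=\ker\rho_j=H\cap(H_1\times\cdots\times\widehat{H_j}\times\cdots\times H_n)$, so $H/M_j\cong H_j$. Also $H$ has finite index in $D$, hence each $H_j\cong\rho_j(H)$ has finite index over... — more precisely $H\cap H_j$ (viewing $H_j$ as a direct factor of $D$) has finite index in $H_j$, and is normal in $H$ since $H_j\trianglelefteq D$. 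These subgroups $H\cap H_j$ will be the building blocks.

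For the lower bound: inside each $H_j$ pick $b(H_j)$ infinite normal subgroups $A_{j,1},\dots,A_{j,b(H_j)}$ generating their direct product in $H_j$. Intersecting with the finite-index normal subgroup $H\cap H_j$ and using that groups in $\mathfrak C$ have no non-trivial finite normal subgroups (so a finite-index subgroup of an infinite normal subgroup in such a group is still infinite and still generates a direct product after replacing each $A_{j,k}$ by $A_{j,k}\cap(H\cap H_j)$, up to a standard argument that a finite-index subnormal subgroup meets each factor in something infinite), one obtains $b(H_j)$ infinite subgroups of $H$ that are normal in $H$ (since $H\cap H_j\trianglelefteq H$ and they are characteristic-ish inside it — here one must be a little careful and instead take their normal closures in $H$, again infinite) and generate their direct product; moreover subgroups coming from different indices $j$ commute because $H_j$ and $H_{j'}$ commute in $D$. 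Collecting these over all $j$ gives $b(H_1)+\cdots+b(H_n)$ infinite normal subgroups of $H$ generating their direct product, so $b(H)\ge b(H_1)+\cdots+b(H_n)$.

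For the upper bound: let $K_1,\dots,K_r$ be infinite normal subgroups of $H$ generating their direct product; we must show $r\le\sum_j b(H_j)$. For each $i$, consider the projections $\rho_j(K_i)\trianglelefteq H_j$. The key point is that for each $i$ there is at least one $j$ with $\rho_j(K_i)$ infinite (else $K_i$ embeds into the finite group $\prod_j \rho_j(K_i)/\cdots$ — more carefully, $K_i\le \prod_j \rho_j(K_i)$ and if all $\rho_j(K_i)$ are finite then $K_i$ is finite, contrary to hypothesis). Assign to $K_i$ such an index $j=j(i)$. The crux is then to bound, for a fixed $j$, the number of $i$ with $j(i)=j$: I claim these give commuting infinite normal subgroups $\rho_j(K_i)$ of $H_j$ — commuting because $[K_i,K_{i'}]=1$ in $H$ forces $[\rho_j(K_i),\rho_j(K_{i'})]=1$ in $H_j$, and since $H_j\in\mathfrak C$ commuting infinite normal subgroups with trivial... — here I need that they actually generate their direct product, i.e.\ pairwise intersect trivially. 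Using that $H_j$ has no non-trivial abelian normal subgroups, pairwise commuting normal subgroups that are infinite and non-trivial do intersect trivially (an intersection of two commuting normal subgroups is central in the product of the two, hence would be an abelian — no: it is contained in the centre of $\langle\rho_j(K_i),\rho_j(K_{i'})\rangle$, and being normal in $H_j$... one argues the intersection is abelian or invokes that $\rho_j(K_i)\cap\rho_j(K_{i'})$ centralises both factors, so is abelian if it is contained in their product's centre, giving an abelian normal subgroup of $H_j$, forcing triviality). Hence for each fixed $j$ the number of $i$ with $j(i)=j$ is at most $b(H_j)$, and summing over $j$ gives $r\le\sum_j b(H_j)$, completing the proof.

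The main obstacle I expect is the bookkeeping in both directions around passing between a normal subgroup of $D$ (or $H_j$) and its intersection/image with $H$: one must repeatedly use that a subgroup of finite index in an infinite group from $\mathfrak C$ is infinite, that images and finite-index intersections of direct-product-generating families still generate a direct product (which can fail in general but is rescued here by the ``no abelian normal subgroup'' hypothesis turning ``commute'' into ``intersect trivially''), and that $[K_i,K_{i'}]=1$ is inherited by projections. Getting the direction-by-direction argument to line up exactly with $b(H_j)$ rather than something weaker is where care is needed; Corollary~\ref{invariant factors} (finiteness and the orbit interpretation of $b$) is available if $H_j$ is itself a finite-index subgroup of a branch group, but the statement as given only assumes $H_j\in\mathfrak C$, so the argument must stay at the abstract group-theoretic level.
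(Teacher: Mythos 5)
Your overall strategy is the paper's: the same two inequalities, the same observation that each $K_i$ has at least one infinite projection, and the same use of the ``no non-trivial abelian normal subgroups'' hypothesis to convert commuting into trivial intersection. There is, however, one genuine gap in your upper bound. You reduce ``the subgroups $\rho_j(K_i)$ generate their direct product in $H_j$'' to ``they pairwise intersect trivially'', and you only verify the pairwise condition. Pairwise trivial intersection of normal subgroups does not imply that they generate their direct product (already the three proper subgroups of the Klein four-group fail this), so as written the bound on the number of $i$ with $j(i)=j$ does not follow. The repair is exactly the paper's argument: for fixed $i$ and $j$, the subgroup $K_{i,j}=\rho_j(K_i)$ centralizes the \emph{whole} product $P_{i,j}=\prod_{i'\neq i}\rho_j(K_{i'})$, because $K_i$ centralizes every $K_{i'}$ with $i'\neq i$ and centralizing passes to projections; hence $K_{i,j}\cap P_{i,j}$ is an abelian normal subgroup of $H_j$ and so is trivial. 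That is the correct formulation of ``generate their direct product'', and it is proved by the same one-line argument you applied only to pairs.

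On the lower bound your hedging about normality is unnecessary and the normal-closure detour should be dropped: each chosen infinite $L\triangleleft H_j$, viewed inside $D$, is normal in all of $D$ (the remaining direct factors centralize $H_j$), so $L\cap H$ is already normal in $H$; it has finite index in $L$ because $H$ has finite index in $D$, hence is infinite, with no appeal to the absence of finite normal subgroups. The direct product property is inherited simply because $L\cap H\leq L$ and the various $L$ generate their direct product in $D$. With these two repairs your argument coincides with the paper's proof.
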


\begin{proof}

Let $K_1,\dots, K_r$ be infinite normal subgroups of $H$ that generate their direct product in $H$.
For $i\in \{1,\ldots,r\}$ and $j\in\{1,\ldots,n\}$, write $K_{i,j}=\rho_j(K_i)\triangleleft H_j$.
If $i\in \{1,\ldots,r\}$ and $K_{i,j}$ is finite for all $j$ then $K_i$ is finite, a contradiction.
So for each $i$ there is some $j$ such that $K_{i,j}$ is infinite.
Therefore $r\leq\sum r_j$, where for each $j$, we write $r_j$ for the number of indices $i$
with $K_{i,j}$ infinite. Now each subgroup $K_i$ centralizes all other subgroups
$K_{i'}$ with $i'\neq i$. Therefore for fixed $j$, each $K_{i,j}$ centralizes the product $P_{i,j}$ of
all subgroups $K_{i',j}$ with $i'\neq i$; and since $H_j$ has no non-trivial abelian normal subgroups,
the normal subgroups $K_{i,j}$, $P_{i,j}$ intersect trivially. Thus the subgroups $K_{i,j}$ with
$1\leq i\leq r$ generate their direct product in $H_j$. It follows that $r_j\leq b(H_j)$, and hence
$r\leq b(H_1)+\cdots+b(H_n)$. In particular, $b(H)$ is finite and bounded by $\sum b(H_j)$.

It remains to prove that $\sum b(H_j)\leq b(H)$.  For each $j$, find a family of
$b(H_j)$ infinite normal subgroups of $H_j$ that generate their direct product in $H_j$.
If $L\triangleleft H_j$ is one of these subgroups, then its image $\bar L$ under
the natural injection $H_j\rightarrow D$ is an infinite normal subgroup of $D$; hence $\bar L\cap H$
is an infinite normal subgroup of $H$. In this way we find $\sum b(H_j)$ infinite normal subgroups of $H$ that evidently generate their direct product. 
The result follows.
\end{proof}

\begin{cor}\label{commimplies}
Let $\mathfrak{D}$ be the family of branch groups which act  as a $p$-group on each layer of the $p$-ary tree $T$. 
Let $\Gamma_1$ and $\Gamma_2$ be, respectively, direct products of $n_1$ and $n_2$ groups in $\mathfrak{D}$. If $\Gamma_1$ and $\Gamma_2$ are abstractly commensurable then $n_1\equiv n_2 \mod p-1$.
\end{cor}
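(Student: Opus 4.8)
The plan is to assemble \Cref{commimplies} from the preceding results on $b(H)$ by a short bookkeeping argument. First I would observe that each group $G\in\mathfrak{D}$ lies in the class $\mathfrak{C}$ of \Cref{subdirect}: by \Cref{notvsol} a branch group has no non-trivial abelian normal subgroups, and by \Cref{invariant factors} $b(G)$ is finite. Moreover, since $G$ acts as a $p$-group on each layer, \Cref{mod p-1} gives $b(G)\equiv 1\pmod{p-1}$. Hence for $\Gamma_1=G_1\times\cdots\times G_{n_1}$ with each $G_i\in\mathfrak{D}$, and similarly $\Gamma_2$, I want to conclude $b(\Gamma_1)=\sum_{i=1}^{n_1}b(G_i)\equiv n_1\pmod{p-1}$ and $b(\Gamma_2)\equiv n_2\pmod{p-1}$; then abstract commensurability will force $n_1\equiv n_2$.

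The key steps, in order, would be: (1) check the hypotheses of \Cref{subdirect} are genuinely available. \Cref{subdirect} computes $b(H)$ for $H\leqf D=H_1\times\cdots\times H_n$ \emph{under the assumption that each projection $\rho_j\colon H\to H_j$ is surjective}. So I would first pass to such an $H$: given $\Gamma_1,\Gamma_2$ abstractly commensurable, pick isomorphic finite-index subgroups $H_1'\leq\Gamma_1$ and $H_2'\leq\Gamma_2$. A finite-index subgroup of $\Gamma_1$ need not project onto each factor, but it contains a further finite-index subgroup that does, or—cleaner—I replace each factor $G_i$ by the (finite-index) image of the projection, which is again a branch group on the $p$-ary tree acting as a $p$-group on each layer, hence still in $\mathfrak{D}$; here I use that a finite-index subgroup of a branch group on $T$ is again a branch group on $T$ (transitivity on layers can be recovered after passing to a suitable rooted subtree, or one uses the subgroup structure directly). (2) Apply \Cref{subdirect} to get $b(\Gamma_1')=\sum b(G_i')$ for the resulting subdirect decomposition, and likewise for $\Gamma_2'$. (3) Apply \Cref{mod p-1} to each $G_i'$ to get $b(G_i')\equiv 1$, so $b(\Gamma_1')\equiv n_1\pmod{p-1}$ and $b(\Gamma_2')\equiv n_2\pmod{p-1}$. (4) Since $b$ is manifestly an invariant of abstract commensurability for groups with no non-trivial finite normal subgroups—any two commensurable such groups share a common finite-index subgroup, and $b$ is determined by the finite-index subgroups via the "irredundant subdirect decomposition" description noted after the Definition—conclude $b(\Gamma_1')=b(\Gamma_2')$, whence $n_1\equiv n_2\pmod{p-1}$.

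The main obstacle I anticipate is step (1): making the reduction to subdirect subgroups honest, i.e. verifying that after replacing each $G_i$ by a projection image one still has groups in $\mathfrak{D}$ and that $b$ is unchanged on passing between commensurable groups. Concretely, I must check (a) that a finite-index subgroup $H$ of a branch group $G$ acting on $T$, after restricting attention to an appropriate rooted subtree $T_v$ on which $H$ acts transitively level-by-level, is itself a branch group on that subtree and still acts as a $p$-group on each layer—this is standard but needs the remark that $\rst_H(n)$ has finite index and that $H$ acts transitively on some $L_n$ hence, by Remark \ref{bounded orbits}-type reasoning, transitively on the descendants of a chosen vertex; and (b) that $b(H)=b(G)$ when $H\leqf G$ and both have no non-trivial finite normal subgroups, which follows from \Cref{invariant factors} since both equal the stable number of $H$- (resp. $G$-) orbits on layers and these agree after passing to a common finite-index subgroup. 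Once this reduction is in place, steps (2)–(4) are immediate from the quoted results and the congruence, and the corollary follows. I would phrase the write-up so that the only real content is "$\mathfrak{D}\subseteq\mathfrak{C}$, $b\equiv 1\pmod{p-1}$ on $\mathfrak{D}$, and $b$ is additive on subdirect products and commensurability-invariant," citing \Cref{notvsol}, \Cref{invariant factors}, \Cref{mod p-1}, and \Cref{subdirect} respectively.
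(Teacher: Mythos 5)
Your overall strategy is the paper's: show $\mathfrak{D}\subseteq\mathfrak{C}$, use Lemma~\ref{mod p-1} to get $b\equiv 1\pmod{p-1}$, use Lemma~\ref{subdirect} for additivity, and compare isomorphic finite-index subgroups $H_1\leqf\Gamma_1$, $H_2\leqf\Gamma_2$. You are also right that the one point needing care is that an arbitrary $H\leqf\Gamma_1$ need not have surjective projections, so Lemma~\ref{subdirect} does not apply to $H\leq G_1\times\cdots\times G_{n_1}$ verbatim (the paper's own write-up leaves this reduction implicit); replacing each factor $G_j$ by $\rho_j(H)$ is the correct move. However, your justification of this step claims more than is true and more than is needed: $\rho_j(H)$ need \emph{not} be a branch group on $T$, since it may fail to act transitively on the layers, and ``recovering transitivity on a rooted subtree'' replaces the group by a different one. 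The repair is simply that the quoted results are already stated for finite-index subgroups: $\rho_j(H)\leqf G_j$ has no non-trivial abelian normal subgroups by Lemma~\ref{notvsol}(b) and has $b$ finite by Corollary~\ref{invariant factors}, hence lies in $\mathfrak{C}$; and it is a subgroup of $\Aut(T)$ acting as a $p$-group on each layer and of finite index in a branch group, so the second sentence of Lemma~\ref{mod p-1} gives $b(\rho_j(H))\equiv 1\pmod{p-1}$ directly. No claim that $\rho_j(H)\in\mathfrak{D}$ is required.

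The second genuine error is the assertion in step (4) (and in your point (b)) that $b$ is an invariant of abstract commensurability, equivalently that $b(H)=b(G)$ for $H\leqf G$. This is false, and is contradicted by the paper itself: Corollary~\ref{invariant factors} gives $b(G)=1$ for any branch group $G$ (it is transitive on every layer) while a finite-index subgroup can have $b(H)$ as large as $|G:H|$; and the closing example has $\Gamma$ commensurable with $\Gamma^3$ although $b(\Gamma)=1\neq 3=b(\Gamma^3)$. Indeed the entire content of Corollary~\ref{commimplies} is that only the residue of $b$ modulo $p-1$ survives commensurability. Fortunately the false claim is not load-bearing in your chain of deductions: having shown $b(H_i)\equiv n_i\pmod{p-1}$ for the chosen finite-index subgroups, you only need $b(H_1)=b(H_2)$ for two \emph{isomorphic} groups, which is plain isomorphism invariance of $b$. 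With these two corrections your argument coincides with the paper's proof.
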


\begin{proof}
By Corollary \ref{invariant factors}, $\mathfrak{D}$ is contained in $\mathfrak{C}$. Thus, for $i=1$, $2$, Lemmas \ref{mod p-1} and \ref{subdirect} together imply that 
$b(H_i)\equiv n_i \mod p-1$ for all $H_i\leqf \Gamma_i$.  Hence if there are isomorphic subgroups $H_1$, $H_2$ then we must have $n_1\equiv n_2\mod p-1$.
\end{proof}

Finally, let $\Gamma$ be the Gupta--Sidki $3$-group.
This is a branch group on the ternary tree, and its properties were investigated in  
\cite{GS, Sidki1, Sidki2}.  We note that {\em $\Gamma$ is an example of a finitely generated torsion group that is abstractly commensurable with its direct cube but not its direct square.}  The final assertion here follows from Corollary \ref{commimplies}.  
In \cite{Atransfer}, the first author proved that any infinite finitely generated subgroup
of $\Gamma$ is abstractly commensurable with either $\Gamma$ or $\Gamma\times\Gamma$.
Therefore {\em the Gupta--Sidki $3$-group has precisely two abstract commensurability classes of finitely generated infinite subgroups. }

\subsection*{Acknowledgement}
The first author thanks the {\em Fundaci\'on La Caixa} (Spain) for financial support.

\end{document}